\newtheorem{thm}{Theorem}
\newtheorem{lem}[thm]{Lemma}
\newtheorem{prop}[thm]{Proposition}
\newtheorem{defn}[thm]{Definition}
\theoremstyle{definition}
\newtheorem{rem}[thm]{Remark}
\newtheorem{assum}[thm]{Assumption}
\DeclareSIUnit\permille{\text{\textperthousand}}
\DeclareFontFamily{U}{mathc}{}
\DeclareFontShape{U}{mathc}{m}{it}%
{<->s*[1] mathc10}{}
\DeclareMathAlphabet{\mathfrak}{U}{mathc}{m}{it}
\newcommand{\s}[2][]{^{(#2)^{#1}}}
\newcommand{\hs}[2][]{^{[#2]^{#1}}}
\newcommand{\setdef}[2]{\left\{ #1 \left\vert \begin{array}{l}#2\end{array} \hspace*{-1ex} \right.\right\}}
\newcommand{\norm}[1]{\left\lVert#1\right\rVert}
\newenvironment{salign}{\thinmuskip=1mu\medmuskip=1mu\thickmuskip=1mu\align}{\endalign}
\newenvironment{salign*}{\thinmuskip=1mu\medmuskip=2mu\thickmuskip=3mu \csname align*\endcsname}{\csname endalign*\endcsname}
\newenvironment{bm}{\setlength\arraycolsep{2pt}\bmatrix}{\endbmatrix}
\newenvironment{sbm}{\small \bm}{\endbm}
\newcommand{\figref}[1]{Fig.~\ref{#1}}
\DeclareMathOperator{\blkdiag}{blkdiag}
\DeclareMathOperator{\diag}{diag}
\DeclareMathOperator{\trace}{trace}
\DeclareMathOperator{\expected}{\mathbb{E}}
\newcommand{\E}[2][]{\expected_{#1}\left[#2\right]}
\DeclareMathOperator{\cov}{cov}
\DeclareMathOperator{\row}{row}
\let\P\relax\DeclareMathOperator{\P}{\textbf{P}}
\newcommand{\ellipsoid}[2]{\varepsilon\left(#1,#2\right)}
\newcommand{\NV}[2]{\mathcal{N}\left(#1,#2\right)}
\let\P\relax\DeclareMathOperator{\P}{\mathbb{P}}
\newcommand{\pr}[2][]{\P_{#1}\left(#2\right)}
\newcommand{\prr}[2][]{\P_{#1}\big(#2\big)}
\newcommand{\B}[1]{\mathcal{B}\left(#1\right)}
\newcommand{\kpl}[1]{\ifthenelse{\equal{#1}{0}}{_{0}}{_{#1}}}
\newcommand{\kPl}[2]{_{k\ifthenelse{\equal{#1}{0}}{}{+#1},\ifthenelse{\equal{#2}{0}}{}{+#2}}}
\newcommand{\kps}[1]{_{#1}}
\newcommand{\kPs}[2]{_{#1,#2}}
\newcommand{\kp}[1]{\kpl{k+#1}}
\newcommand{\K}{K}
\newcommand{\xf}{\textbf{x}}
\newcommand{\Kf}{\textbf{\K}}
\newcommand{\uf}{\textbf{u}}
\newcommand{\wf}{\textbf{w}}
\newcommand{\Pf}{\textbf{P}}
\newcommand{\Af}{\textbf{A}}
\newcommand{\Bf}{\textbf{B}}
\newcommand{\Cf}{\textbf{C}}
\newcommand{\Ef}{\textbf{E}}
\newcommand{\Vf}{\textbf{V}}
\newcommand{\Mf}{\textbf{M}}
\newcommand{\Sf}{\textbf{S}}
\newcommand{\du}{{\delta_u}}
\newcommand{\dx}{{\delta_x}}
\newcommand{\predH}{\mathbb{N}_H}
\newcommand{\p}{p}
\newcommand{\st}{\text{s.t.:}\quad}
\newcommand{\ident}{\mathds{1}}
\newcommand{\Sc}{\mathcal{S}}
\newcommand{\Xc}{\mathcal{X}}
\newcommand{\Wc}{\mathcal{W}}
\newcommand{\Am}{\mathcal{A}}
\newcommand{\Em}{\mathscr{E}}
\newcommand{\Wf}{\textbf{W}}
\newcommand{\Ls}{\mathcal{L}\mathfrak{s}}
\newcommand{\Lx}{\mathcal{L}\mathfrak{x}}
\newcommand{\Lu}{\mathcal{L}\mathfrak{u}}
\newcommand{\sgeq}{\succcurlyeq}
\newcommand{\CR}[1]{{#1}}
\title{\LARGE \bf	Uncertain AoI in Stochastic Optimal Control of Networked and Constrained LTI Systems}
\author{Jannik Hahn$^{1}$ and Olaf Stursberg$^{1}$
\thanks{$^{1}$Control and System Theory, Dept. of Electrical Engineering and Computer Science, University of Kassel, 34121 Kassel, Germany. Email: \{jhahn,stursberg\}@uni-kassel.de.}%
}
\begin{document}
\maketitle
\thispagestyle{empty}
\pagestyle{empty}



\begin{abstract}                
	This paper addresses finite-time horizon optimal control of single-loop networked control systems with stochastically modeled communication channel and  disturbances. To cope with the uncertainties, an optimization-based control scheme is proposed which uses a disturbance feedback and the age of information as central aspects. The disturbance feedback is an extension of the control law used for balanced stochastic optimal control previously proposed for control systems without network. Balanced optimality is understood as a compromise between minimizing of expected deviations from the reference and  minimization of the uncertainty of future states.
	Time-varying state constraints as well as time-invariant input constraints are considered, and the controllers are synthesized by semi-definite programs.
\end{abstract}


\section{INTRODUCTION}
The increasing importance of networked control systems (NCS) is due to two reasons: First, wireless sensors have become widely available and simpler to integrate in control systems, enabling the control of non-stationary processes in single loop, see e.g. \cite{4118465}. Secondly, the need for controlling  large systems with multiple agents has promoted the development of distributed control strategies. On the one hand, these reduce the computational complexity of solving the overall control problem (thus alleviating to meet real-time requirements), while principles of optimality can be maintained \cite{6415463}. On the other hand, they also require information flow between the agents. From the perspective of a single agent, the reception of information from a neighbor can be seen as obtaining sensor information through wireless communication. In consequence, many methods provided for single-loop loops consisting of plant, sensor, communication unit, and controller can be transferred to larger distributed networks -- thus, the motivation of this paper is to propose
an optimal control scheme for a single-loop structure, however, with the perspective to be used for more general system structures and for receding horizon control in upcoming work.

Any wireless communication network with possible imperfections (such as packet loss and/or latencies) can be interpreted and modeled as stochastic process. Consequently, the control loop is subject to this uncertainty, and the imperfections of communication may endanger the stability of the controlled system, at least if the resulting delays are in the order of the dominant time constants of the plant. Then, the consideration of the dynamics of the communication network within controller synthesis is mandatory, and the joint design of the network operation and the controller appears as a promising concept \cite{9146991,hahn2018distributed}. 

In contrast to the latter two approaches, this paper addresses the finite-time horizon stochastic optimal control of  uncertain linear systems. Derived from the methods in \cite{1185110,4738806} and \cite{asselborn2015probabilistic}, the recently presented scheme of \emph{balanced stochastic optimal control} in \cite{WC20} determines a compromise between rejecting disturbances and meeting goals of optimal control. Thus, uncertainties arising in predicting the system behavior can be effectively minimized, while the convergence of the expected states to the origin is still guaranteed. The disturbance rejection is formulated with respect to an optimization over feedback policies, hence the satisfaction of state and input constraints in a probabilistic sense (also called \emph{chance constraints}) can be considered as well.
However, the control scheme in \cite{WC20} is not applicable to networked control systems with uncertain availability of information. To handle this, the work in \cite{wu2010stability,zhivoglyadov2003networked} uses a policy which switches between open-loop and closed-loop control depending on to the availability of information. There, the stability is guaranteed for certain assumptions on the communication network, but the satisfaction of state and/or input constraints formulated for the plant were not addressed. (This holds true for the vast majority of papers on feedback control of networked control systems.)

In contrast, the present paper combines the two mentioned aspects: the balanced stochastic optimal control of \cite{WC20}, which optimizes over a control policy to consider constraints as well as the reduction over conservatism, and the use of a switching control policy for the communication network similar to \cite{zhivoglyadov2003networked}.
The paper is organized as follows: Sec. 2 clarifies the notation used throughout the paper, and Sec. 3 specifies the control problem. Sec. 4 explains how the \emph{age of information} can be used efficiently to encode the effect of the communication network on the plant control, and the Sec. 5 as main part shows how this measure can be used within controller synthesis. Section 6 provides a numerical example, and Sec. 7 concludes the paper.

\section{PRELIMINARIES}
This section introduces a part of the notation used in the sequel, and it refers to some mathematical facts required in the upcoming sections.

Let $s(k)$ denote the discrete-time value of a vector $s(t)\in\mathbb{R}^n$ for time $t=t_0+k\cdot\Delta t$ with $k\in\mathbb{N}^+$ and a constant time step $\Delta t\in\mathbb{R}^+$. If $s(k)$ is predicted in a previous point of time $l$, the notation $s_{k|l}$ is used. However, for brevity of notation, let $t_0=0$ be frequently used as the time instant of prediction, and then $s_k:=s_{k|0}$ is used for short for the predicted value.

The symbol $\predH$ denotes the set $\setdef{k\in\mathbb{N}_{\geq0}}{k \leq H-1}$ with $H$ denoting the prediction horizon.
Bold letters denote matrices collecting sub-matrices of a  signal predicted along the horizon, e.g. \mbox{$\textbf{s} =\begin{sbm}s_0^T,s_1^T,\dots,s_H^T\end{sbm}^T$}, and the operator $\row_k(\cdot)$ is used to select the $k$-th row of such an matrix, e.g. \mbox{$\row_{3}(\textbf{s})=s_{3} $}.

For a vector $s\in\mathbb{R}^n$, a convex polytope: 
\begin{align}
	\mathbb{S}=\setdef{s}{C_s \cdot s \leq b_s}, 
\end{align}
is denoted by $(C_s,b_s)$, and $n_\mathbb{S}$ is the number of its faces.

An ellipsoidal set is defined by a center point $\bar{s}$ and a shape matrix $\Sc $. 
The affine transformation of an ellipsoidal set with matrix $M$ and vector $v$ is again an ellipsoidal set according to:
\begin{align}
M\cdot\ellipsoid{\bar{s}}{\Sc }+v = 	\ellipsoid{M\bar{s}+v}{M\Sc M^T}. \label{eq:affinetrans}
\end{align}

A multivariate normal distribution of an $n$-dimensional random vector $s$ with covariance matrix $\Sc \in\mathbb{R}^{n \times n}$, and mean value $\bar{s}\in\mathbb{R}^n$ is written as $s\sim\NV{\bar{s}}{\Sc}$, and
a Bernoulli distribution of a random vector $s$ with probability matrix $p$ is referred to by $s\sim\B{p}$.
The expected value of a random value $s$ is denoted by $\E s = \bar{s}$, and the covariance by $\cov[s] = \Sc$. The expected value is always based on the information available in $t_0$.

The sum of two normally distributed random variables $s_1\sim\NV{\bar{s}_1}{\Sc _1}$ and $s_2\sim\NV{\bar{s}_2}{\Sc _2}$ is again normally distributed:
\begin{align}
	s_1+s_2\sim\NV{\bar{s}_1+\bar{s}_2}{\Sc _1+\Sc _2}. \label{eq:stillnd}
\end{align}
The level curves of a Gaussian probability density function are ellipsoidal. Throughout the paper, the mean value of a normal distribution coincides with the center point of the confidence ellipsoid, and the shape matrix is equal to the covariance matrix of the distribution, thus the same notation for these quantities is used.

The symbol $\norm{s}_Q=s^T\cdot Q \cdot s$ denotes a weighted 2-norm of a vector $s$ with symmetric and positive semi-definite weight matrix $Q$.


\section{NETWORKED SYSTEM AND PROBLEM DEFINITION}

The sturcture of the system class under consideration is shown in \figref{fig:sys} and consists of a discrete-time linear system with probabilistically modeled additive disturbances, a controller, and a communication network. The systems dynamics is modeled as:
	\begin{align}
		&x(k+1) = Ax(k)+Bu(k)+Ew(k) \label{eq:state_eq},\\
		&w(k)\sim\NV{0}{\Wc(k)}, \label{eq:distdef}
	\end{align}
	where $x\in \mathbb{R}^{n_x}$ is the state vector, $u\in \mathbb{R}^{n_u}$ the input vector, and $w\in \mathbb{R}^{n_w}$ the disturbance vector. The additive disturbances are assumed to be i.i.d. and normally distributed with zero-mean and covariance matrix $\Wc$.
	\begin{figure}[t!]
		\begin{subfigure}[b]{0.75\linewidth}
			\centering
			\psfrag{w}[r][r]{$w(k)$}
			\psfrag{u}[r][r]{$u(k)$}
			\psfrag{a}[c][c]{Plant}
			\psfrag{c}[c][c]{Controller}
			\psfrag{x}[r][r]{$x(k)$}
			\psfrag{S}[c][c]{S}
			\psfrag{R}[c][c]{R}
		    \psfrag{com}[c][c]{com.}
		    \psfrag{net}[c][c]{net.}
			\psfrag{min}[c][c]{}
			\includegraphics[width=0.99\linewidth]{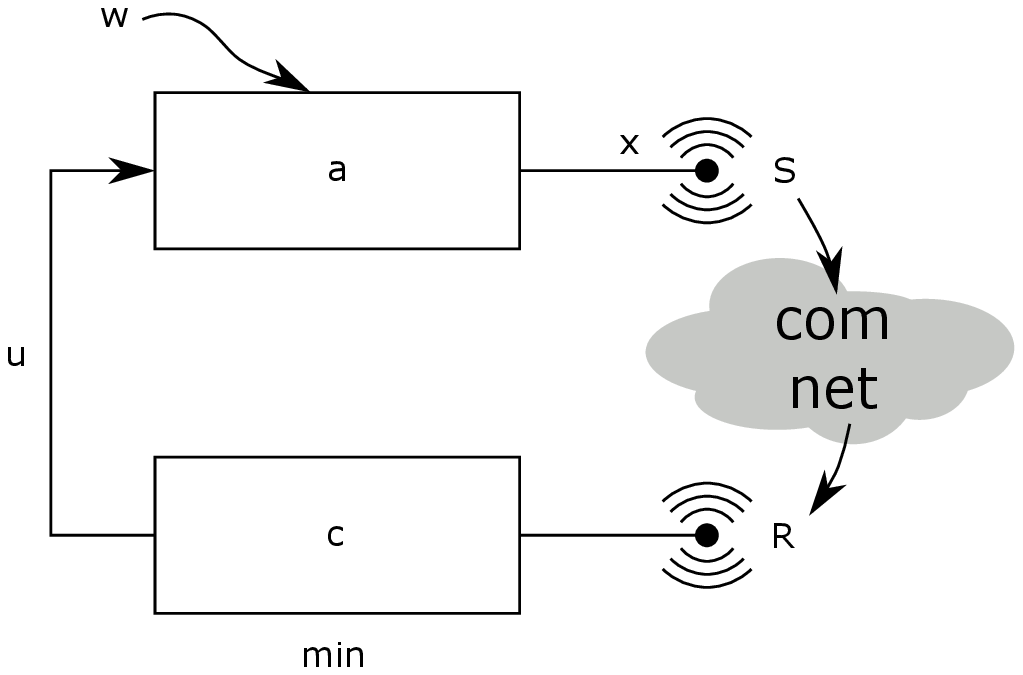}
			\subcaption{Overall system.}
			\label{fig:sys}
		\end{subfigure}%
		\hfil
		\begin{subfigure}[b]{0.25\linewidth}
			\centering
			\psfrag{A}[c][c]{\textfrak{S}}
			\psfrag{B}[c][c]{\textfrak{R}}
			\psfrag{x0}[c][c]{$x(k)$}
			\psfrag{p0}[l][l]{$\p(k)$}
			\psfrag{0p}[c][c]{$1-\p(k)$}
			\psfrag{k0}[c][c]{}
			\includegraphics[width=0.9\linewidth]{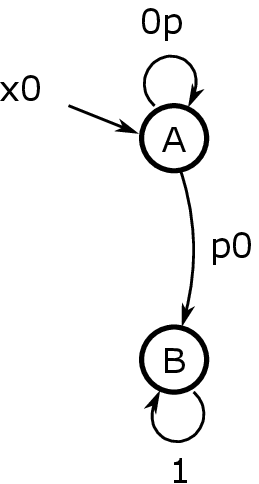}
			\smallskip
			\subcaption{Communication network}
			\label{fig:comnet}
		\end{subfigure}%
		\caption{Structure of the considered type of networked control system and the communication system.}
		\label{fig:setup}
		\hfil
	\end{figure}{}
	
	\begin{assum}\label{assum:NV}
	    The initial state $x_0:=x(t_0)$ is known. The stochastic process of the disturbance may vary over time, but is assumed to be known as $\Wc_k:=\Wc(k)$. 
	\end{assum}
	
	The state $x(k)$ and input $u(k)$ have to satisfy polytopic chance-constraints with a probabilities given by $\dx$, and $\du$ respectively (both typically chosen to be close to 1):
	\begin{align}
		&\prr{x(k)\in\mathbb{X}_k}\geq \dx, \;\; \mathbb{X}_k=\setdef{x}{C_{x_k} \cdot x \leq b_{x_k}}, \quad \label{eq:stateconstraint}\\
		&\prr{u(k)\in\mathbb{U}}\geq \du, \quad\;\; \mathbb{U}=\setdef{u}{C_u \cdot u \leq b_u}. \label{eq:inputconstraint}
	\end{align}
	The sets $\mathbb{X}_k$ and $\mathbb{U}$ are assumed to be convex and compact, and contain the origin in their interior for all $k$.

	The controller is connected to the plant in wired form, but the link between the sensor and the controller is established by wireless communication. Assume here that the communication network  is a simple one-link network and consists of a sender S, a receiver R, and the link-probability $\p(k)$. The link probability can be understood as an abstract representation of the network characteristics (or protocol), as e.g. resending information in case of failed transmission, or activation/deactivation of the channel by a network controller, etc. The simplicity of the communication network is here chosen for brevity of explanation, but the control synthesis described in the following is not limited to this network structure.
	Assuming a time-varying link-probability, the communication network is modeled by the Markov chain  for each time-step $k$, as shown in \figref{fig:comnet}.
	
	\begin{assum}\label{assum:link_prob}
		The link-probability $\p(k)$ may vary over time, but is assumed to be known (possibly according to a predictive network control scheme, see  \cite{8442615}), implying that $p_k:=p(k)$ holds true. \CR{Further, possible information dropout at time $k$ and $k+t$ are i.i.d. for all $k$ and $t$.}
	\end{assum}

    In time $k$, the current state $x(k)$ is available to S. The information is broadcasted to R (and thus available to the controller) with probability $\p(k)$. If information is not received, the information remains with  S, and the Markov chain remains in the corresponding state \textfrak{S}.
	Independently of whether information is received, the controller has to apply a control input $u(k)$ to the system, which is then simultaneously affected by the disturbance $w(k)$.\\
	In $k+1$, a new state $x(k+1)$ is measured, and if the previous transfer failed, old and new state information is available to S.
	\begin{assum}\label{assum:accum_sendig}
		The communication network uses an acknowledgment"=based protocol like TCP, thus the S knows whether information is received by R, or not. In case the information failed to be sent in time $k$, then old and new information are transmitted together in $k+1$ with link probability $\p(k+1)$.
	\end{assum}
	\begin{rem}
	    Assumption \ref{assum:accum_sendig}, which will be necessary to reconstruct disturbances affecting the dynamics previously,  may imply an arbitrarily large package size in recursive execution -- this, however, would only pose problems for the unrealistic case of persistently low values of $\p(k)$.
	\end{rem}
	
	Similarly to the control scheme presented in \cite{WC20}, this paper uses time-varying feedback control laws, aiming at the determination of an admissible control sequence for the horizon $H$:
	\begin{align}
	    \uf = \begin{bmatrix} u_0^T & u_1^T & \dots & u_{H-1}^T
	    \end{bmatrix}^T.
	\end{align}
	The following description distinguishes between an offline controller design (where disturbances, communication links, states etc. are modeled stochastically), and the online use of the control law (where specific values for disturbances, communications, etc. are measured and processed).
	According to Assumption \ref{assum:NV}, the initial state $x_0$ is known, but the disturbance is normally distributed, i.e., deterministic values for future states are not available when the control law is determined. Nevertheless, one can predict the behavior of the state $x\kps1$ under the impact of the control input $u\kps{0}$ and the disturbance $w\kps{0}\sim\NV{0}{\Wc_0}$ according to \eqref{eq:state_eq}. The latter is again normally distributed with respect to \eqref{eq:affinetrans} and \eqref{eq:stillnd}:
	\begin{align}
		x\kps{1}=Ax_0+Bu\kps{0}+Ew_0 \sim\NV{\bar{x}\kps{1}}{\Xc \kps{1}} \label{eq:statex1NV}.
	\end{align}
	Due to recursive computation, the following states $x_k$, $k>1$  are normally distributed, too.
	The main control objective is to steer the mean value of the state to the origin with acceptable costs for the input, while the state uncertainty is minimized at the same time. The	minimization of the uncertainty is equivalent to the minimization of the volume of the confidence ellipsoids for the states (see Sec. 2), and thus to minimize the set of states which are reachable with a certain probability.
	
	As the availability of information on the system state to the controller is a central point in this paper, this aspect is described in more detail in the upcoming section.

\section{AGE OF INFORMATION}

Similar to the previous work in \cite{hahn2019robust}, the following definition of AoI is employed:
\begin{defn}
    The quantity $a(k)\in\mathbb{N}_{\geq0}$ denotes the age of the newest state information $x(l),\; l\in\mathbb{N}_{\leq k}$ available to the controller, and it quantifies the difference between the time-instances of sending and using $x(l)$.
\end{defn}
\begin{exmp}
    According to Assumption \ref{assum:NV}, the initial state $x_0$ is known to the controller. If then the communication link fails for the next five time-steps, i.e. for $k\in\{1,\ldots,5\}$, the newest information is still $x_0$, and $a(5)=5$. If the next transmission is successful, the AoI in $k=6$ equals zero, i.e. $a(6)=0$.
\end{exmp}
\begin{rem} \label{rem:AoI}
	The Age of Information is defined with respect to the information of the state $x(k-a(k))$. In consequence of Assumption \ref{assum:accum_sendig} and \eqref{eq:state_eq}, the controller can reconstruct each disturbance up to $w(k-a(k)-1)$ in time-step $k$. Hence with $a(k)=0$, the newest disturbance possibly available to the controller in $k$ is $w(k-1)$.
\end{rem}
Similar to the prediction of future states, there are no deterministic values for the AoI available at the time-instance of planning, but the behavior of the AoI is predictable according to the Markov chain shown in \figref{fig:AoI-MC} with states $\sigma_j \rightarrow a_k=j$, and the probability \mbox{$q_k=1-\p_k$} for incrementing the AoI.
\begin{figure}[h!]
    \centering
    \psfrag{a}[c][c]{$\sigma_0$}
    \psfrag{b}[c][c]{$\sigma_1$}
    \psfrag{c}[c][c]{$\sigma_2$}
    \psfrag{d}[c][c]{$\sigma_3$}
    \psfrag{dots}{\dots}
    \psfrag{p}{$\p_k$}
    \psfrag{q}{$q_k$}
    \includegraphics[width=\linewidth]{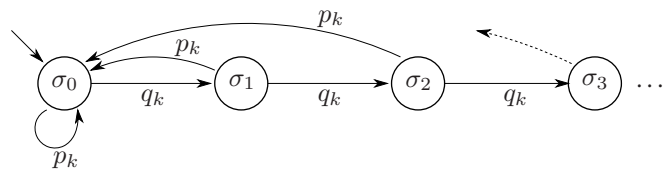}
    \caption{Exemplary Markov chain to model the AoI for 3 time-steps.}
    \label{fig:AoI-MC}
\end{figure}
This Markov chain corresponds to the communication network shown in \figref{fig:comnet}, and is exemplary for 3 time-steps, since the maximum AoI is three according to $\sigma_3$. For more time-steps, the model is extended to the right and ends with a self-loop on the last state. With time-variant communication link probabilities, the transition matrix $P_k$ for this model is time-variant, too:
\begin{align}
	P_k = \begin{bmatrix}
	p_k & p_k & p_k & 1\\
	q_k & 0 & 0 & 0\\
	0 & q_k & 0 & 0\\
	0 & 0 & q_k & 0
	\end{bmatrix}.
\end{align}
Initialized to $\mu_0=[1\;\;0\;\;0\;\;0]^T$, the probability vector $\mu_k$ for the AoI evolves according to:
\begin{align}
    \mu_k=\left(\prod_{l=1}^{k} P_l\right)\cdot \mu_0 \label{eq:mu},
\end{align}
and the probability for the age of information is given by:
\begin{align}
    \prr{a(k)=j}=\mu_k[j] \label{eq:prop_mu}
\end{align}
with $\mu_k[j]$ being the $(j+1)$-th entry of the vector $\mu_k$.


\section{CONTROLLER SYNTHESIS}
This section describes the synthesis of suitable feedback policies to control the system in presence of stochastic AoI, and it explains how the state uncertainty can be formalized. 

\subsection{Feedback policy}    
On one hand, if the state is available to the controller, the optimal control strategy for deterministic and constrained linear systems is based on time-varying state feedback control laws, see e.g. \cite{borrelli2003constrained}. On the other hand, if state information is not available, the optimal control law should be based on the feedback of an estimated value (e.g. a previously predicted value), i.e. a feedback of expected values is realized.
    In general with uncertain availability of the state information to the controller, a switched control policy of time-varying feedback laws can thus be proposed to combine the two cases:
    \begin{salign}
        u(k) = \sum_{r=0}^{k}K_{k,r} \cdot \tilde{x}_r,\quad \tilde{x}_r=\begin{cases}
        x(r) & \text{if $x(r)$ is available \;} \\
        \bar{x}_{r|l} & \text{otherwise}
        \end{cases} \label{eq:statefeedback}
    \end{salign}
	Here, $l:=k-a(k)$ denotes for a moment the time-step of the last certain information, while $\bar{x}_{r|l}$ is the state predicted in $l$ for a time $r> l$ of the nominal system:
	\begin{align}
	    \bar{x}_{r|l}=A^{r-l} x(l)+\sum_{r=1}^{a(k)} A^{a(k)-r}Bu{(l+r-1)}. \label{eq:expectedstate}
	\end{align}
	In other words, $\bar{x}_{r|l}$ is the expected state for time $r$ predicted or reconstructed from the last certainly known state $x(l)$.
	
    \begin{rem}
        To keep the controller synthesis as simple as possible, the paper aims at using the same feedback matrix $K_{k,r}$ in \eqref{eq:statefeedback} regardless the availability of the state $x(r)$.
        Hence, the offline synthesis aims at determining the feedback matrices. When using these online, the controller decides whether the measured and communicated state (if available) or a predicted state has to be fed back according to \eqref{eq:statefeedback} with feedback matrix $K_{k,r}$.
    \end{rem}

    In general for systems without communication, a state feedback can be reformulated into a feedback of initial state and disturbances (the interested reader is referred to \cite{goulart2006optimization} and \cite[Lemma~2]{WC20}). Motivated by and similar to these reformulations of control laws, the following is stated:
    \begin{lem} \label{lem:controllaw}
        The state feedback \eqref{eq:statefeedback} with \eqref{eq:expectedstate} can be reformulated into a disturbance feedback:
        \begin{align}
            u(k)=V_k\cdot x_0+ \sum_{r=0}^{k-1} \ident_{k,r}\cdot M_{k,r} \cdot w(r) \label{eq:inddf}
        \end{align}
        with feedback matrices $V_k\in\mathbb{R}^{n_u \times n_x}$ and $M_{k,r}\in\mathbb{R}^{n_u\times n_w}$ and with an indicator function depending on the AoI: 
        \begin{align}
            \ident_{k,r}=\begin{cases} 1 & \forall\; r < k-a(k)\\ 0 & \forall\; r\geq k-a(k) \end{cases}. \label{eq:ident}
        \end{align}
    \end{lem}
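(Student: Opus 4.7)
The plan is to prove the lemma by induction on $k$, substituting the definitions of $\tilde{x}_r$ into the state feedback \eqref{eq:statefeedback} and then collecting the coefficients of $x_0$ and of the individual disturbances $w(r)$.

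For the base case $k=0$, Assumption~\ref{assum:NV} gives $a(0)=0$ and $\tilde{x}_0=x_0$, so $u(0)=K_{0,0}\,x_0$. This matches \eqref{eq:inddf} with $V_0=K_{0,0}$ and an empty disturbance sum; the indicator $\ident_{0,r}$ is trivially zero because $r<0$ is impossible.

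For the inductive step, set $l:=k-a(k)$ and assume that every earlier input $u(j)$, $j<k$, admits a disturbance-feedback form as in \eqref{eq:inddf}. I would split $\sum_{r=0}^{k} K_{k,r}\tilde{x}_r$ into two parts. For $r\leq l$, substitute $\tilde{x}_r=x(r)$ and unroll \eqref{eq:state_eq} to obtain $x(r)=A^r x_0+\sum_{j=0}^{r-1}A^{r-1-j}\bigl(Bu(j)+Ew(j)\bigr)$, which only involves $w(j)$ with $j<r\leq l$. For $r>l$, substitute $\tilde{x}_r=\bar{x}_{r|l}$ via \eqref{eq:expectedstate}: this depends only on $x(l)$ (hence, after one further unroll, on $w(j)$ with $j<l$) and on the already-applied inputs $u(l),\dots,u(r-1)$. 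Replacing each such $u(j)$ by its inductive expression and then collecting all coefficients of $x_0$ defines $V_k$, while the coefficients of each $w(r)$ with $r<l$ define $M_{k,r}$.

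The main obstacle is to argue that no disturbance $w(r)$ with $r\geq l$ ever appears, so that the indicator $\ident_{k,r}$ from \eqref{eq:ident} correctly zeroes out those terms. This rests on two observations: first, by construction $\bar{x}_{r|l}$ propagates $x(l)$ through the \emph{nominal} dynamics only and therefore carries no disturbance of index $\geq l$; second, for every $j$ with $l\leq j<k$ the controller has no information fresher than $x(l)$, so $a(j)=j-l$, and the inductive hypothesis forces $u(j)$ to depend only on $w(r)$ with $r<j-a(j)=l$. Once these two facts are in place, reading off $V_k$ and $M_{k,r}$ from the collected coefficients reduces to a routine bookkeeping exercise in linear algebra.
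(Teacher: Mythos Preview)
Your proposal is correct and follows essentially the same route as the paper's proof: both unroll the dynamics, observe that the nominal predictions $\bar{x}_{r|l}$ introduce no disturbances of index $\geq l$, argue that the already-applied inputs $u(l),\ldots,u(k-1)$ likewise depend only on $w(0),\ldots,w(l-1)$, and then collect the resulting linear coefficients into $V_k$ and $M_{k,r}$. The only organisational difference is that you run an explicit induction on $k$ and invoke the observation $a(j)=j-l$ for $l\le j<k$ directly, whereas the paper first sets up abstract composition maps $f_k$, $\kappa_k$, $\tilde{\kappa}_k$ for the full-information case and then recurses through $\tilde{f}_{k,l}$ in the partial-information case; the underlying mechanism is identical.
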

    \begin{proof} see Appendix \ref{app:proofcontrollaw}.
    \end{proof}

    Control law \eqref{eq:inddf} with \eqref{eq:ident} implies the feedback of all disturbances $w(k-a(k)-1)$, which are available to the controller in time-step $k$.
    According to Assumption \ref{assum:accum_sendig} the controller can reconstruct all disturbances $w(k-a(k)-1)$, see Remark \ref{rem:AoI}.

    \begin{rem} \label{rem:indicator}
    	A common disturbance feedback as used in (\cite{gross2014distributed,goulart2006optimization,WC20}, etc.), and the proposed disturbance feedback \eqref{eq:inddf} differ only wrt. the indicator function $\ident_{k,r}$, and	
    	the indicator function only depends on the AoI $a(k)$. Thus, (ex)changing the communication network in the setting \figref{fig:setup} results only in an adaption of the Markov chain in \figref{fig:AoI-MC} used to model the AoI. Consequently, the prediction of the systems behavior and the controller synthesis described in the following are decoupled from the exact model of the communication network, but requires a prediction of the AoI.
    \end{rem}
    
    \subsection{Prediction of the closed-loop behavior}
    To plan the systems behavior over a horizon of length $H$, the notation of stacked vectors for the states, inputs, and disturbances is used:
    \begin{align}
    	\xf = \begin{sbm}
    	x_0\\
    	x\kps1\\
    	\vdots\\
    	x\kps{H}
    	\end{sbm},\quad \uf =\begin{sbm}
    	u\kps0\\
    	u\kps1\\
    	\vdots\\
    	u\kps{H-1}
    	\end{sbm},\quad \wf=\begin{sbm}
    	w\kps0\\
    	w\kps1\\
    	\vdots\\
    	w\kps{H-1}
    	\end{sbm}.
    \end{align}
    With matrices $\Af$, $\Bf$, and $\Ef$ of adequate size (see Appendix \ref{app:vectornotation}), the evolution of the states over the horizon is encoded by:
    \begin{align}
    	\xf &=\Af x_0 + \Bf \uf + \Ef \wf . \label{eq:statepred}
    \end{align}
	Similarly as in \cite{WC20}, a stacked form of the disturbance feedback \eqref{eq:inddf} is introduced
	(with feedback matrices $(\Vf,\Mf)$ as in Appendix \ref{app:vectornotation}) to predict the closed-loop behavior in the following. The indicator function \eqref{eq:ident} is modeled as random variable with Bernoulli distribution to model the AoI:
   	\begin{align}
   		\ident_{k,r}\sim\B{p_{k,r}}, \quad p_{k,r}=\sum_{l=0}^{k-r-1}\mu_k[l], \label{eq:bernoulli}
   	\end{align}
   	see Appendix \ref{proof:bernoulli}. Collecting all indicator functions over the horizon into an indicator matrix $\Pf\in\mathbb{R}^{H\cdot n_u \times H\cdot n_w}$ (which has the same size as the feedback matrix $\Mf$):
	\begin{align}
		\Pf=\begin{sbm}
			0 & 0 &\dots & 0\\
			\ident\kPs{1}{0} & 0 &\dots & 0\\
			\vdots & \ddots & \ddots & \vdots\\
			\ident\kPs{H-1}{0} & \dots & \ident\kPs{H-1}{H-2} & 0\\
		\end{sbm}\otimes \textbf{1}_{n_u\times n_w}, \label{eq:structureP}
	\end{align}
	the control trajectory results in:
	\begin{align}
		\uf=\Vf\cdot x_0 + \left(\Pf\circ \Mf\right)\cdot \wf \label{eq:inputpred}.
	\end{align}
	With the Hadamard product $\circ$, the indicator matrix $\Pf$ selects the entries of $\Mf$ with which all disturbances available to the controller can be fed back. With \eqref{eq:statepred} and \eqref{eq:inputpred}, the closed-loop dynamics of the system follows to:
	\begin{align}
		\xf&=(\Af+\Bf\Vf)x_0+\left(\Ef+\Bf\left(\Pf\circ\Mf\right)\right)\wf. \label{eq:statepred_vec}
	\end{align}

	The uncertainty caused by the communication network (introduced by $\Pf$) only affects the rejection of the disturbance $\wf$. With stochastically independent $\Pf$ and $\wf$, the state prediction $\xf$ is a mixed distribution, and a vector of expected values $\E{\xf}$ is defined. For the Bernoulli distribution, all possible cases are considered, one of which is:
	\begin{align}
	    \Pf\s{i}\in\Pf\s{\Omega}=\{\Pf\s{1},\dots,\Pf\s{o}\} \label{eq:PfOmega}
	\end{align}
	with $\Pf\s{\Omega}$ denoting the set of all possible realizations and $o$ its cardinality. The state trajectory can be written as function depending on the Bernoulli case:
	\begin{align}
	\xf =  \begin{cases}
	\xf\s{1} & \text{if } i=1\\
	\xf\s{2} & \text{if } i=2\\
	\vdots\\
	\xf\s{o} & \text{if } i=o
	\end{cases}
	\end{align}
	with:
	\begin{align}
		\xf\s{i}&=(\Af+\Bf\Vf)x_0+\left(\Ef+\Bf\left(\Pf\s{i}\circ\Mf\right)\right)\wf, \label{eq:statepred_i}\\
				&=: \Am x_0 + \Em\s{i} \wf,
	\end{align}
	and closed-loop matrices $\Am$ and $\Em\s{i}$.
	All disturbances $w_k$ are assumed to be iid., i.e. $\wf$ is a vector of normally distributed disturbances. Thus, each state $\xf\s{i}$ is normally distributed for $k>0$, too,  and can be referred to by expected values and covariance matrices representing the state uncertainty.

	\begin{rem}\label{rem:complexity}
		The minimization of the uncertainty of $\xf$ could be realized by minimizing the uncertainty of each state $\xf\s{i}$ separately. But for increasing $H$, this would lead to large complexity, since each realization $\Pf\s{i}$ results in a different input trajectory $\uf\s{i}$ according to \eqref{eq:inputpred}, and the maximum number of possible realizations $o$  is given by the Catalan number $o\leq C_H=\frac{(2H)!}{H!(H+1)!}$. Thus, e.g., the one-link communication network given in \figref{fig:comnet} leads to \mbox{$o=2^{H-1}$}.
		In addition, the communication network will evaluate over time, thus even if all possible cases are minimized (and a set of input trajectories $\uf\s{i}$ is obtained), the question of which $\uf\s{i}$ the optimal one is could not be answered till time reaches the end of the prediction horizon.
	\end{rem}
	
	Since neither the consideration of each possible case obtained by the Bernoulli distribution nor the consideration of an \textit{averaged} case is advisable, the complexity is here reduced by considering one specific case of the Bernoulli distribution. 
	This case is denoted by the superscript $\beta$ (leading to $\xf\s{\beta}$, $\uf\s{\beta}$, and $\Pf\s{\beta}$), an it is described in detail in the following section.
	
\begin{figure*}[b]
	\rule{\linewidth}{0.1pt}\smallskip
	\begin{salign}
		\Lu_k\hs{i}=\begin{bm}
			(b_u\hs{i}-C_u\hs{i}V_k x_0) & C_u\hs{i} c_u^\frac{1}{2} (\Pf\s{\beta}_{k} \circ \Mf_{k})\Wf\\
			\star & (b_u\hs{i}-C_u\hs{i}V_k x_0)\Wf
		\end{bm}\sgeq0,\quad
		\Lx_{k+1}\hs{j} = \begin{bm}(b_{x_{k+1}}\hs{j}- C_{x_{k+1}}\hs{j}\Am_{k}x_0) & C_{x_{k+1}}\hs{j} c_x^\frac{1}{2} \Em_{k}\s{\beta}\cdot\Wf\\
			\star & (b_{x_{k+1}}\hs{j}-C_{x_{k+1}}\hs{j}\Am_{k}\cdot x_0)\Wf \end{bm}\sgeq 0	\quad \label{eq:LMIconst}
	\end{salign}
\end{figure*}	
	
\subsection{State and Input constraints}
   This subsection first clarifies the consequences considering one predefined case $\Pf\s{\beta}$, and afterwards, how this case is chosen.\\
	The constraints \eqref{eq:stateconstraint} and \eqref{eq:inputconstraint} require that each state $x(k)$ and input $u(k)$ are contained in their admissible sets $\mathbb{X}$ and $\mathbb{U}$ at least with likelihood $\dx$ and $\du$.
	With choosing to optimize over one predefined control sequence according to $\Pf\s{\beta}$, 
	there are two possibilities in each time-step $k$:
	i) All disturbances, which were expected to be available by $\row_k{\Pf\s{\beta}}$ in $k$, are available to the controller, and the corresponding control law $u_k\s{\beta}$ is applicable. Even if more disturbances are available than used with $u_k\s{\beta}$, the control law \textit{can} be chosen to $u(k):=u_k\s{\beta}$. If it \textit{is} chosen to $u(k):=u_k\s{\beta}$, then, the state evolves according to $x_k\s{\beta}$, and the complexity caused by the number of different state behaviors collapses.
	ii) Otherwise, some required disturbances are missing and $u_k\s{\beta}$ is not applicable. Then only a tailored control law (so $u(k)$ as in \eqref{eq:inddf}, and not $u_k\s{\beta}$) is applicable, and the satisfaction of state and input constraints cannot be guaranteed. 
	
	Now, the main idea is the following: In each time-step $k$, \CR{adapt} the likelihoods to satisfy the state and input constraints $\dx$ and $\du$ according to:			
	\begin{align}
		\du = \gamma_k\s{u}\cdot \alpha_k,\quad  \dx = \gamma_k\s{x}\cdot \CR{\prod_{t=1}^{k-1}\alpha_t}, \label{eq:tailoredlikelihoods}
	\end{align}
	where $\alpha_k$ is the probability that the determined control law $u_k\s{\beta}$ is applicable (case i)), and where $\gamma_k\s{u}$ and $\gamma_k\s{x}$ are \textit{tailored likelihoods}.
	\CR{Note, that the probability for occurrence of state $x_k\s{\beta}$ is stochastically depended of all prior $\alpha_t,\;t<k$, thus the product is used.}
	Then, state and input constraints under the condition of control law $u_k\s{\beta}$ are satisfied at least with probability $\gamma_k\s{x}$ and $\gamma_k\s{u}$, and the satisfaction of  \eqref{eq:stateconstraint} and \eqref{eq:inputconstraint} are guaranteed.\\
	Note that the tailored likelihoods in \eqref{eq:tailoredlikelihoods} depend on $\alpha_k$ and case $\Pf\s{\beta}$, which are defined in the following statements:
    Let $\Pf\s{\beta}$ collect all indicator functions according to:
	\begin{align}
		\ident_{k,r}\s{\beta}=\begin{cases}
		1 & \text{if} \quad p_{k,r}\geq \alpha_k\\
		0 & \text{else.}
		\end{cases}, \label{eq:ident_beta}
	\end{align}
	and $\Wf= \blkdiag(\Wc_0,\dots,\Wc_{H-1})$ all covariance matrices over the prediction horizon, then the following is stated:	
	\begin{prop}\label{lem:constraints}
		Let $(C_u\hs{i},b_u\hs{i})$ and $(C_{x_{k+1}}\hs{j},b_{x_{k+1}}\hs{j})$ denote the $i$-th half-space of the polytopic input set $\mathbb{U}$, and the $j$-th half-space of the polytopic state set $\mathbb{X}_{k+1}$. Then, the state constraints \eqref{eq:stateconstraint} and the input constraints \eqref{eq:inputconstraint} are satisfied, if the LMIs in \eqref{eq:LMIconst} hold when using:
		\begin{align}
			&\Mf_k=\row_k(\Mf),\quad  \Pf_k\s{\beta}=\row_k(\Pf\s{\beta}),\\
			& \Am_k = \row_k(\Am),\quad \Em_k\s{\beta} = \row_k(\Em\s{\beta}),\\
			&\{c_u,c_x\} = (F_\chi^2)^{-1}(\{\gamma_k\s{u},\gamma_k\s{x}\},\{n_u,n_x\}),
		\end{align}
		for each $i\in\{1,\dots,n_\mathbb{U}\}$ and $j\in\{1,\dots,n_{\mathbb{X}_{k+1}}\}$, and if $\alpha_k$ is chosen to:
		\begin{align}
			\alpha_k &= \min_{r\in R_k} \{p_{k,r},1\},\; R_k=\setdef{r\in\mathbb{N}_{\geq0}}{p_{k,r}> \CR{\du}}.\\
			&\CR{\st \prod_{t=1}^{k-1}\alpha_t\geq \dx}\\[-5ex]
		\end{align}
	\end{prop}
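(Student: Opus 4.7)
The plan is to reduce the chance constraints \eqref{eq:stateconstraint}, \eqref{eq:inputconstraint} to the LMIs \eqref{eq:LMIconst} in three steps: (a) condition on the single realisation $\Pf\s{\beta}$ so that the remaining problem is Gaussian; (b) strengthen each polytopic chance constraint to an ellipsoidal containment in every half-space; (c) rewrite the containment condition as an LMI via a Schur complement.

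For step (a), I would first observe that under the event that $u_t\s{\beta}$ is applicable for every $t<k$, the $k$-th row of \eqref{eq:statepred_vec} specialised to $\Pf=\Pf\s{\beta}$ produces $x_{k+1}\s{\beta}\sim\NV{\Am_k x_0}{\Em_k\s{\beta}\Wf(\Em_k\s{\beta})^{T}}$, and $u_k\s{\beta}$ is Gaussian with mean $V_k x_0$ and covariance $(\Pf_k\s{\beta}\circ\Mf_k)\Wf(\Pf_k\s{\beta}\circ\Mf_k)^{T}$. By Assumption~\ref{assum:accum_sendig} the disturbances available to the controller are nested in time, so $u_k\s{\beta}$ is applicable iff the oldest required disturbance is available; by \eqref{eq:bernoulli} and the definition of $R_k$, that probability equals $\min_{r\in R_k}p_{k,r}=\alpha_k$. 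The law of total probability then yields
\begin{equation*}
\prr{u(k)\in\mathbb{U}}\geq\gamma_k\s{u}\alpha_k=\du,\quad\prr{x(k)\in\mathbb{X}_k}\geq\gamma_k\s{x}\prod_{t=1}^{k-1}\alpha_t=\dx,
\end{equation*}
where the state bound uses that $x(k)=x_k\s{\beta}$ as soon as $u_t\s{\beta}$ is applicable for every $t<k$, and the product form $\prod_{t=1}^{k-1}\alpha_t$ is justified by the remark following \eqref{eq:tailoredlikelihoods}.

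For step (b), it then suffices to show $\prr{x_{k+1}\s{\beta}\in\mathbb{X}_{k+1}}\geq\gamma_k\s{x}$ and $\prr{u_k\s{\beta}\in\mathbb{U}}\geq\gamma_k\s{u}$. Since $c_x=(F_\chi^{2})^{-1}(\gamma_k\s{x},n_x)$ is the chi-squared quantile matching the dimension $n_x$, the set $\ellipsoid{\Am_k x_0}{c_x\Em_k\s{\beta}\Wf(\Em_k\s{\beta})^{T}}$ carries probability $\gamma_k\s{x}$ under the Gaussian law of $x_{k+1}\s{\beta}$, so its containment in $\mathbb{X}_{k+1}$ is a sufficient condition. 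Containment in the $j$-th half-space collapses to the scalar inequality
\begin{equation*}
\bigl(b_{x_{k+1}}\hs{j}-C_{x_{k+1}}\hs{j}\Am_k x_0\bigr)^{2}\geq c_x\cdot C_{x_{k+1}}\hs{j}\Em_k\s{\beta}\Wf(\Em_k\s{\beta})^{T}\bigl(C_{x_{k+1}}\hs{j}\bigr)^{T},
\end{equation*}
and for step (c), applying the Schur complement with pivot $(b_{x_{k+1}}\hs{j}-C_{x_{k+1}}\hs{j}\Am_k x_0)\Wf\succ 0$ reproduces exactly $\Lx_{k+1}\hs{j}\sgeq 0$; the input LMI $\Lu_k\hs{i}\sgeq 0$ follows identically after replacing $\Am_k$ by $V_k$ and $\Em_k\s{\beta}$ by $\Pf_k\s{\beta}\circ\Mf_k$.

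The most delicate step is the probabilistic decomposition $\prr{x(k)=x_k\s{\beta}}\geq\prod_{t=1}^{k-1}\alpha_t$, since the events ``$u_t\s{\beta}$ applicable'' are coupled through the AoI Markov chain and are not independent in general; an explicit argument that the product is a valid lower bound (or an acceptable conservative estimate) is required, and this is precisely what the remark below \eqref{eq:tailoredlikelihoods} flags. By contrast, the Schur-complement manipulation and the ellipsoid-in-half-space reformulation are routine and parallel the treatment in \cite{WC20}.
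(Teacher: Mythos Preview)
Your proposal is correct and follows essentially the same route as the paper: condition on the event that the $\beta$-law is applicable, bound the chance constraint from below via the law of total probability by the product of $\alpha_t$'s times the $\beta$-conditional probability, then reduce the resulting Gaussian chance constraint to confidence-ellipsoid containment in each half-space and finally to the LMIs \eqref{eq:LMIconst}. The paper phrases the conditioning step through the set $\Pf\s{>\beta}=\{\Pf\s{i}:\ident_{k,r}\s{i}\geq\ident_{k,r}\s{\beta}\ \forall k,r\}$ of realisations for which the $\beta$-law can be enforced by truncation, but this is exactly your ``$u_t\s{\beta}$ applicable for every $t<k$''; for the LMI step the paper simply cites \cite[Proposition~5]{WC20}, whereas you spell out the Schur-complement argument explicitly. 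Your caveat about the product $\prod_{t=1}^{k-1}\alpha_t$ is well placed: the paper's own proof does not resolve it beyond the remark after \eqref{eq:tailoredlikelihoods}, so your treatment is at least as rigorous as the original on this point.
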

	\begin{proof}
		See Appendix \ref{proof:constraints}.
	\end{proof}
	
	Note that the above choice of determining $\alpha_k$ is not the only possible one, and other options
	satisfying $\alpha_k>\max\{\dx,\du\}$ are conceivable.

\subsection{Determination of feedback matrices by SDP}
	For minimizing the state covariance matrices, an auxiliary matrix $\Sf=\Sf^T\sgeq0$ is introduced (motivated by the scheme in \cite{asselborn2015probabilistic}), which determines an upper bound for the cross-covariance of the state trajectory $\xf\s{\beta}$:
	\begin{align}
	\Sf\sgeq \cov\left[\xf\s{\beta}\right] = \Em\s{\beta} \Wf \Em\s[T]{\beta}
	\end{align}
	formulated by the following LMI:
	\begin{align}
	\Ls = \begin{bmatrix}
	\quad \Sf \quad \null & \Em\s{\beta}  \Wf\\
	\star & \Wf
	\end{bmatrix}\sgeq 0.
	\end{align}
An SDP problem with stacked weights $\mathcal{Q}$, and $\mathcal{R}$ is stated as: 
\begin{align}
	&\min_{\Vf,\Mf} \norm{\E\xf}_\mathcal{Q} + \norm{\E\uf}_\mathcal{R} + \norm{\trace\left(\Sf\right)}_\mathcal{S} \\
	\text{s.t.:} \quad & \E\xf=(\Af+\Bf\Vf)x_0,\quad \E\uf=\Vf x_0, \label{prob:2}\\
	&\Ls\sgeq 0,\\
	&\Lx\hs{i}\kp1\sgeq  0 \quad  \forall\; i\in\{1,\dots,n_{C_{x_{k+1}}}\}, \forall\; k\in\predH,\\
	&\Lu\hs{i}\kps{k}\sgeq 0 \quad \forall\; i\in\{1,\dots,n_{C_u}\}, \forall\; k\in\predH,\\
	&\mathcal{Q}=\blkdiag(Q_0,\dots,Q_H),\\
	&\mathcal{R}=\blkdiag(R_0,\dots,R_{H-1}),
\end{align}
with symmetric and positive semi definite weights $Q_k$, $R_k$, $\mathcal{S}$. The optimal solution of this problem is denoted by $(\Vf^\star,\Mf^\star)$, and an optimal state feedback matrix $\Kf^\star$ follows to specify the control law \eqref{eq:statefeedback} (see \cite[Theorem~7]{WC20}). In case, more information is available in $k$ than expected according to $\Pf\s{\beta}$, the corresponding feedback matrices $M_{k,r}$ are chosen to zero in the optimization, such that \mbox{$(\Pf\circ\Mf)=(\Pf\s{\beta}\circ\Mf)$} holds, thus guaranteeing the satisfaction of the constraints.

\section{Numerical Example}
To illustrate the proposed method, it is applied to a severely disturbed example with the dynamics of type \eqref{eq:state_eq} using:
\begin{salign}
	A{\small =}\begin{sbm}
		1 &  0.3 \\ 0 & 1
	\end{sbm},\, \quad 
	B{\small =}\begin{sbm}
	0.045\\ 0.3
	\end{sbm},\,\quad 
	E{\small =}I_2,
\end{salign}
and the initial state and disturbance distributions:
\begin{align}
	x(t_0)=\begin{sbm}
		85\\-5
	\end{sbm}, \;
	\Wc(k){\small=}\begin{sbm}0.1 & 0\\0&3\end{sbm}\, \forall k\in\predH.
\end{align}
The input constraint is defined by $\mathbb{U}=\setdef{u}{\norm{u}\leq30}$, and the chance constraint for this region is selected to:
\begin{align}
	\prr{u(k)\in\mathbb{U}}\geq \du =0.95.
\end{align}

A time-invariant state constraint is defined to:
\begin{align} 
	\mathbb{X}\kps{k+1}=\setdef{x}{-20\leq x_1(k) \leq 100,\\ -40\leq x_2(k) \leq 30} \;\forall k\in\predH
\end{align}
with the probability $\prr{x(k)\in\mathbb{X}\kps{k}}\geq \dx =0.9$.

\begin{figure}[t!]
	\centering
	\small
	\psfrag{x}[c][c]{state $x_1(k)$}
	\psfrag{y}[c][c]{state $x_2(k)$}
	\includegraphics[width=\linewidth]{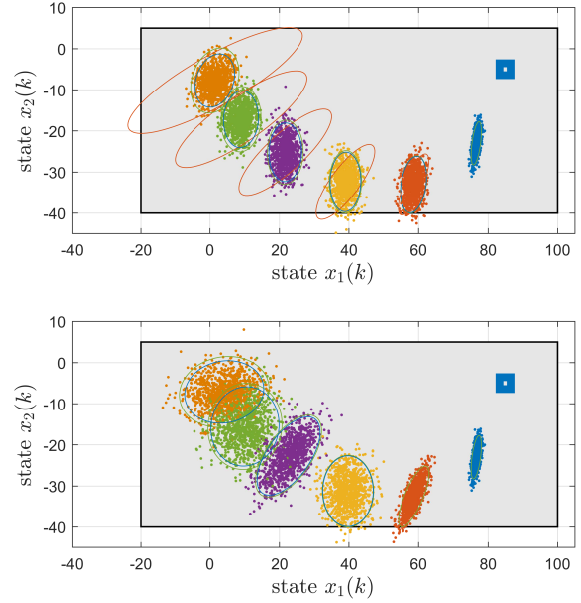}
	\caption{\label{fig:sim_state2} \small{
		Simulation results for even values of $k$, and the two parameterizations of the communication link: i) top , ii) bottom.  The initial states are shown as blue square and the admissible state space as gray box. The $\dx$-confidence ellipsoids of the state  are shown as blue ellipsoids for the case of successful transmission of any information, and as red ellipsoids for the case that communication fails in every time-step. The $\gamma_k\s{x}$-confidence ellipsoids used in the optimization problem for $\alpha_k$ are marked as green ellipsoids. In addition, the simulated states obtained for 1000 Monte Carlo simulations are shown as colored dots (different colors for selected $k$).}}
\end{figure}
To demonstrate the proposed methodology, the probability of the communication link according to \figref{fig:comnet} is chosen with two different parameterizations, a constant one and an alternating one:
\begin{itemize}
    \item[i)] $p(k) = 0.8 \;\forall k\in\predH$,
    \item[ii)] $\left[p(0) \dots p(H-1) \right] :=0.1\cdot \left[1\;\; 9\;\; 1\quad 1\;\; 9\;\; 1\quad\dots\right]$.
\end{itemize}
Note that these numbers are chosen, by intention, to model a poorly performing communication network (in fact much worse than typically observed in practice) -- this serves to demonstrate that the proposed scheme nevertheless leads to good control performance.

The control horizon is chosen to $H=12$, and the cost functional is parameterized with time-invariant weights \mbox{$Q_k=\diag(2,1)$}, $R_k=0.1$, and $\mathcal{S}=100$.
The optimization problems were solved on a \SI{3.4}{\giga\hertz} Quad-Core CPU using  \textit{Matlab2018a} and the solver \textit{Mosek}, and the computations took an average  time of around \SI{0.6}{\second}.

\begin{table}[b!]
    \centering
    \begin{tabular}{c|c|c|c|}
         $\varnothing$ MCS &$\sum \norm{x(k)}_{Q_k}$ & $\sum\norm{x(k)-\bar{x}(k)}_{Q_k}$  & $\sum \norm{\bar{x}(k)}_{Q_k}$ \\
         \hline
         version i) & 3920 & 26.8 & 5.03\\ 
         \hline
         version ii) & 3945 & 74.5 & 5.03 \\ 
         \hline
    \end{tabular}
    \caption{Performance criteria for both versions.}
    \label{tab:my_table}
\end{table}
The simulation results for the two parameterizations are shown in \figref{fig:sim_state2}.
It is obvious that the $\gamma_k\s{x}$-confidence ellipsoids are contained in the admissible state space, and that the confidence ellipsoids and the simulated states are reliably steered towards the origin. For an adequate comparison, the same values for the disturbances underlie the two sets of Monte Carlo simulations (MCS).

While for i) (top part of \figref{fig:sim_state2}) the ellipsoids are obtained smaller than in version ii) (bottom), the result of ii) is suitable, too, as the average probability for successful transmission of information is only $\bar p = 0.3\bar 6$, thus less than a half of the $\bar p$ in version i).
Note that the optimization problem considers the different performance of the communication network, such that the blue and green $\dx$-confidence ellipsoids differ in both versions.

In average over all simulated time-steps, the state $x(k)$ lies to $90.32\%$ within the $y_k\s{x}$-confidence ellipsoid for version i), and the input $u(k)$ with $96.508\%$ within the $y_k\s{u}$-confidence ellipsoid. For version ii), the probabilities are $90.48\%$ for the state, and $98.4\%$ for the input. Thus, the state and input constraint are satisfied with only very small conservatism.

For quantitative assessment, Table \ref{tab:my_table} shows three performance criteria for both network parametrizations evaluated in Monte Carlo simulation: 
The first column contains the weighted distance of all states to the origin (representing how good the control goal of approaching the origin is met), which is   similar for both versions.
The second column shows the weighted distance of all states to the corresponding expected state, thus representing the state uncertainty. Due to the poor network performance, a noticeable uncertainty of the states is obtained, where a worse performing network (according to the average link-probability $\bar p$) leads to an increase of the uncertainty of the controlled system. The last column contains the weighted distances of the expected states to the origin, which is similar for both versions.

%

\section{CONCLUSIONS}

This paper transfers a balanced stochastic optimal control scheme to a control system with LTI dynamics and embedded stochastically modeled communication link, where additive disturbances and uncertain communication has to be dealt with.
Uncertainties of the communication network are projected onto a tailored probability for the satisfaction of state and input constraints. Furthermore, balancing between the minimization of uncertainties and the minimization of expected distances to the origin is realized. 
The simulation results show, that the constraints are satisfied with nearly no conservatism, and that a poorly performing communication network only increases the uncertainty and not the performance of the control system (in the sense of state deviation from the reference).
In general, the presented optimization problem is independent of the  complexity of the communication network (such as the number of nodes or links, and possible time-variance) and its imperfections (like delay, packet loss etc.) by the use of a Markov chain modeling the Age of Information. 
In consequence, the SDP for controller synthesis results with the same complexity as in the case without networks.

This paper opens the field of balanced stochastic optimal control in the considered understanding for larger structures of networked control systems, which are subject of current and future work.




\appendix
                           	
\subsection{Vector Notation}\label{app:vectornotation}
    The matrices $\Af\in\mathbb{R}^{(H+1)n_x \times n_x}$ and $\Cf\in\mathbb{R}^{(H+1)n_x \times H\cdot n_x}$ have the following structure:
    \begin{salign}
    	\Af=\begin{sbm}
    	I_{n_x}\\A\\\vdots\\A^{H}
    	\end{sbm},\quad \Cf=\begin{sbm}
    	0 & 0 & \cdots & 0\\
    	I_{n_x} & 0 & \cdots & 0\\
    	A & I_{n_x} & \cdots & 0\\
    	\vdots& \vdots & \ddots & 0\\
    	A^{H-1} & A^{H-2} & \cdots & I_{n_x}
    	\end{sbm},
    \end{salign}
    while $\Bf\in\mathbb{R}^{(H+1)n_x \times H\cdot n_u}$ and $\Ef\in\mathbb{R}^{(H+1)n_x \times H\cdot n_w}$ are chosen to $\Bf = \Cf(I_{n_x}\otimes B)$, and $\Ef = \Cf(I_{n_x}\otimes E)$ respectively.\\
    The disturbance feedback matrices  $\Vf\in\mathbb{R}^{H\cdot n_u \times n_x}$, and $\Mf\in\mathbb{R}^{H\cdot n_u \times H\cdot n_w}$ are defined to:
    \begin{salign}
    	\Vf=\begin{sbm}
    		V\kps{0}\\
    		V\kps{1}\\
    		\vdots\\
    		V\kps{H-1}
    	\end{sbm},\quad
    	\Mf=\begin{sbm}
    		0 & 0 &\cdots & 0\\
    		M\kPs{1}{0} & 0 &\cdots & 0\\
    		\vdots & \ddots & \ddots & \vdots\\
    		M\kPs{H-1}{0} & \cdots & M\kPs{H-1}{H-2} & 0
    	\end{sbm}. \label{eq:structureM}
    \end{salign}

    \subsection{Derivation of equation \eqref{eq:bernoulli}} \label{proof:bernoulli}
    
    Each probability $p_{k,r}$ is given according to \eqref{eq:ident} with respect to the probability for the AoI \eqref{eq:prop_mu}:
    \begin{salign}
    	p_{k,r}&=\E{\ident_{k,r}}=\prr{r<k-a(k)} = \prr{a(k)\leq k-r-1} \\
    	&=\prr{a(k)=0}+ \ldots + \prr{a(k)=k-r-1}=\sum_{l=0}^{k-r-1}\mu_k[l].
    \end{salign}
    
\subsection{Proof of Lemma \ref{lem:controllaw}} \label{app:proofcontrollaw}
    First, consider the system dynamics \eqref{eq:state_eq} and the control law \eqref{eq:statefeedback}, for the case of available state information:
    \begin{salign}
        x_{k+1}=f(x_k,u_k,w_k), \quad
        u_k=\kappa_k(x_0,\ldots,x_k),
    \end{salign}
    with general functions:
    \begin{salign}
        f&: \mathbb{R}^{n_x} \times \mathbb{R}^{n_u} \times \mathbb{R}^{n_w}\rightarrow \mathbb{R}^{n_x},\\
        \kappa_k&: \mathbf{1}_{1,k+1}\otimes \mathbb{R}^{n_x}\rightarrow \mathbb{R}^{n_u} \label{eq:kappa}.
    \end{salign}
    The recursive state equation can be reformulated to a series of functions  $f_k$ and $\kappa_k$, where control laws are rewritten to  $\tilde{\kappa}_k$, e.g. for $k\in\{0,1\}$:
    \begin{salign}
        u_0&=\kappa_0(x_0),\\
        x_1&=f(x_0,u_0,w_0)=f(x_0,\kappa_0(x_0),w_0)=:f_1(x_0,w_0),\\
        u_1&=\kappa_1(x_0,x_1)=\kappa_1(x_0,f_1(x_0,w_0))=:\tilde{\kappa}_1(x_0,w_0),
    \end{salign}
	 or for arbitrary $k\in\mathbb{N}\geq 0$:
    \begin{salign}
        u_k &= \tilde{\kappa}_k(x_0,w_0,\dots,w_{k-1}) \label{eq:uk1},\\
        x_{k}&= f_k(x_0,w_0,\dots,w_{k-1}), \label{eq:xk1}      
    \end{salign}
    or for linear functions:
    \begin{salign}
        \tilde{\kappa}_k &=a\s{x}_0\cdot x_0 + a\s{w}_0\cdot w_0 + \ldots + a\s{w}_{k-1}\cdot w_{k-1},\\
        f_k &=b\s{x}_0\cdot x_0 + b\s{w}_0\cdot w_0 + \ldots + b\s{w}_{k-1}\cdot w_{k-1},
    \end{salign}
    with $a_i$ and $b_i$ \cite{WC20}.
    
    Secondly, if $a_k\geq0$, \eqref{eq:statefeedback} is used in combination with \eqref{eq:expectedstate}. Thus, with the last certain information $x_l \rightarrow l:=k-a_k$, it holds that all states $x_{l+1}$ up to $x_k$ are not available to the controller:
    \begin{salign}
        \underbrace{x_0,\dots,x_{l},}_{\text{available}}\underbrace{x_{l+1},\ldots,x_k}_{\text{not available}}.
    \end{salign}
    With \eqref{eq:statefeedback} and \eqref{eq:expectedstate}, it holds that:
    \begin{salign}
        u_k&=\kappa_k(x_0,\ldots,x_l,x_{l+1|l},\ldots,x_{k|l}), \label{eq:uk2}\\
        x_{k|l}&=f_{k,l}(x_l,u_l,\ldots,u_{k-1}) \label{eq:xk2},
    \end{salign}
    with functions $\kappa_k$ defined in \eqref{eq:kappa} and:
    \begin{salign}
    f_{k,l}:\mathbb{R}^{n_x}\times \left(\textbf{1}_{1,a_k}\otimes \mathbb{R}^{n_u}\right)\rightarrow \mathbb{R}^{n_x}.
    \end{salign}
    
    In \eqref{eq:uk2}, the states with index up to $l$ can be expressed by the functions given in \eqref{eq:xk1}. 
    For the first of the remaining states/inputs in the function arguments, it follows with \eqref{eq:xk1} that:
    \begin{salign}
        x_{l+1|l}&=f_{l+1,l}(x_l,u_l)=:\tilde{f}_{l+1,l}(x_0,w_0,\dots,w_{l-1}),\\
        u_{l+1}&=\kappa_{l+1}(x_0,x_l,x_{l+1|l})=:\tilde{\kappa}_{l+1}(x_0,w_0,\dots,w_{l-1}).
    \end{salign}
    Recursively for $k>l$, the states:
    \begin{salign}
        x_{k|l}=\tilde{f}_{k,l}(x_0,w_0,\dots,w_{l-1}),
    \end{salign}
    and the inputs:
    \begin{salign}
        u_k=\tilde{\kappa}_k(x_0,w_0,\dots,w_{l-1})  \label{eq:tidekappa}
    \end{salign}
    are obtained. Again, with linear functions $f_{k,l}$ and $\kappa_k$, the control law $\tilde{\kappa}_k$ in \eqref{eq:tidekappa} is a linear function of its arguments, thus (again with a set of parameters $a_i$) one can write:
    \begin{salign}
        u_k=a_0\s{x}\cdot x_0 + a_0\s{w}\cdot w_0+\ldots+a_{l-1}\s{w}\cdot w_{l-1}. \label{eq:proofidfb}
    \end{salign}
    Eventually, \eqref{eq:proofidfb} feeds back all disturbances $w_r$ with \mbox{$r\leq l-1=k-a_k-1 \Leftrightarrow r < k-a_k$}. With $V_k:=a_0\s{x}$ and $M_{k,r}:=a_r\s{w}$,  \eqref{eq:proofidfb} equals the disturbance feedback in Lemma \ref{lem:controllaw} for time $k$. \hfill $\qed$
    
%

\subsection{Proof of Proposition \ref{lem:constraints}}	\label{proof:constraints}

	According to Proposition \ref{lem:constraints} the co-domain of $\alpha_k$ is given by:
	\begin{salign}
		\alpha_k\in\;\left]\max(\du,\dx),\;\; 1\right],
	\end{salign}
	such that:
	\begin{salign}
		\gamma_k\s{u}=\frac{\du}{\alpha_k} \in \left[\du,\;\; 1\right],\quad 
		\gamma_k\s{x}\CR{\geq}\frac{\dx}{\alpha_k} \in \left[\dx,\;\; 1\right],
	\end{salign}
	hold. Now recall \eqref{eq:PfOmega}, and let $\Pf\s{>\beta}\subseteq\Pf\s{\Omega}$ denote a subset of indicator matrices $\Pf\s{i}$, for which all entries satisfy the inequality \mbox{$\ident_{k,r}\s{i}\geq\ident_{k,r}\s{\beta}\;\forall k,r\in\predH$}. {\color{red}
	}
	
	Then the following holds true with $\theta$ denoting the behavior of the communication network according to $\Pf\s{\theta}\in\Pf\s{\Omega}$:
	\begin{salign}
		&\prr{x_k\in\mathbb{X}_k}=\sum_{i=1}^{o}\prr{x_k\s{i}\in\mathbb{X}_k\vert \theta=i}\\
		&
		=\sum_{i=1}^o \pr{\theta=i}\cdot \prr{x_k\s{i}\in\mathbb{X}_k}\leq \prod_{t=1}^{k-1}\alpha_t \cdot \prr{x_k\s{\beta}\in\mathbb{X}_k}\leq \dx, \label{eq:abc}
	\end{salign}
	if all control laws $u_k\s{i}$, for which $\Pf\s{i}\in\Pf\s{>\beta}$ holds, are truncated to $u_k\s{\beta}$.
	Therefore, the necessary condition results with \eqref{eq:tailoredlikelihoods} to:
	\begin{salign}
		\prr{x_k\s{\beta}\in\mathbb{X}_k}\leq \frac{\dx}{\CR{\prod_{t=1}^{k-1}\alpha_t}}=\gamma_k\s{x}.
	\end{salign}
	This is at least satisfied, if the $\gamma_k\s{x}$-confidence ellipsoid of the state $x_k\s{\beta}$ lies within the admissible state set, i.e.:
	\begin{salign}
		\mathcal{X}_k\s{\gamma,\beta}\subseteq \mathbb{X}_k. \label{eq:confidencewithinsAS}
	\end{salign}
	Following the same steps of the proof to \cite[Proposition~5]{WC20}, condition \eqref{eq:confidencewithinsAS} is satisfied if the LMI $\Lx_{k+1}\hs{j}\sgeq0$ given in \eqref{eq:LMIconst} is satisfied with the tailored likelihood $\gamma_k\s{x}$ for each half-space $j\in\{1,\dots,n_{\mathbb{X}_{k+1}}\}$ of $\mathbb{X}_{k+1}$, where $n_{\mathbb{X}_{k+1}}$ denotes the number of half-spaces.\\
	The reasoning for the input follows analogously\CR{, but with $\alpha_k$ instead of the product.}  \hfill $\qed$

\section*{Acknowledgment}

Partial financial support by the German Research Foundation (DFG) within the research priority program SPP 1914: \textit{Cyberphysical Networking} is gratefully acknowledged.



\bibliographystyle{IEEEtran}
\bibliography{IEEEabrv,Bibliography.bib}

\end{document}